\documentclass{amsart}
\usepackage{amssymb}
\usepackage[all]{xy}

\title{A characterization of amenability of group actions on $C^\ast$-algebras}
\author{Masayoshi Matsumura}

\address{Department of Mathematical Science, University of Tokyo, Komaba, 153-8914, Japan}
\email{masa@ms.u-tokyo.ac.jp}

\newtheorem*{thm}{Theorem}
\newtheorem*{lem}{Lemma}

\newtheorem*{dfnthm}{Definition/Theorem}
\newtheorem*{dfnprop}{Definition/Proposition}

\begin{document}

\begin{abstract}
We show that coincidence of the full and reduced crossed product $C^\ast$-algebras of a group action on a unital commutative $C^\ast$-algebra implies amenability of the action whenever the group is exact. This is a partial answer to a problem posed by C. Anantharaman-Delaroche in 1987.

\end{abstract}

\maketitle

\section{Introduction}

Anantharaman-Delaroche introduced in 1987 amenability of group actions on $C^\ast$-algebras and proved that amenability of actions implies coincidence of the full and reduced crossed product $C^\ast$-algebras. Moreover she proved that an action on a nuclear $C^\ast$-algebra is amenable if and only if its reduced crossed product is nuclear \cite{AD}. We show the converse of the first result assuming that the group is exact in the sense of E. Kirchberg and S. Wassermann \cite{KW} and the algebra is unital and commutative. 
Moreover, we can also prove a corresponding result for noncommutative $C^\ast$-algebras under a certain technical assumption. More precisely, we have the following theorem.

\begin{thm}
Let $\Gamma$ be a (discrete) exact group and $A$ be a unital $\Gamma$-$C^\ast$-algebra. Assume that one of the following conditions is satisfied.

\begin{itemize}
\item[{\rm (i)}] The full and reduced crossed products of $A$ by $\Gamma$ coincide and $A$ is commutative.
\item[{\rm (ii)}] The full and reduced crossed products of $A \otimes A^{\rm op}$ by $\Gamma$ coincide and $A$ is nuclear.
\end{itemize}

Then the action $\Gamma \curvearrowright A$ is amenable.
\end{thm}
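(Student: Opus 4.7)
\textit{Proof plan.} The strategy is to reduce both cases to Anantharaman--Delaroche's theorem that, for a nuclear $\Gamma$-$C^\ast$-algebra, amenability of the action is equivalent to nuclearity of the reduced crossed product. Since $A$ is nuclear in each case, the task is to prove that $A \rtimes_r \Gamma$ is nuclear.

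By exactness of $\Gamma$ (via Kirchberg--Wassermann / Ozawa), $\Gamma$ admits an amenable action on $\ell^\infty(\Gamma)$, equivalently on $\beta\Gamma$. Tensoring with $A$ (with trivial action on the $A$ factor) gives an amenable action on $\ell^\infty(\Gamma, A) = A \otimes \ell^\infty(\Gamma)$, whose reduced crossed product is therefore nuclear. The orbit embedding $\pi \colon A \to \ell^\infty(\Gamma, A)$, $\pi(a)(g) = \alpha_{g^{-1}}(a)$, is $\Gamma$-equivariant (intertwining $\alpha$ with left translation) and so induces a $\ast$-homomorphism $\pi \rtimes \Gamma \colon A \rtimes \Gamma \to \ell^\infty(\Gamma, A) \rtimes \Gamma$. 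Combining the hypothesized $A \rtimes \Gamma = A \rtimes_r \Gamma$ with the coincidence of full and reduced crossed products on the target (automatic since the ambient action is amenable), I would thus realise $A \rtimes_r \Gamma$ as mapping into the nuclear algebra $\ell^\infty(\Gamma, A) \rtimes_r \Gamma$.

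The crucial step is to transfer nuclearity back to $A \rtimes_r \Gamma$, i.e.\ to show that $\pi \rtimes \Gamma$ is an embedding with a ucp left inverse (or to extract directly the Anantharaman--Delaroche approximating net). A $\Gamma$-equivariant conditional expectation $\ell^\infty(\Gamma, A) \to A$ does not exist for non-amenable $\Gamma$ (since an invariant mean on $\ell^\infty(\Gamma)$ would force amenability of the group), so one must construct a substitute using the structural hypotheses. In case~(i), commutativity of $A$ allows a Haagerup-type construction of a net of finitely supported positive-type functions $\Gamma \to Z(A^{\ast\ast})$ verifying the Anantharaman--Delaroche criterion, where all positivity lives in the abelian centre. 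In case~(ii), the role of commutativity is played by the $A \otimes A^{\mathrm{op}}$-bimodule structure: nuclearity of $A$ gives $A \otimes A^{\mathrm{op}} = A \otimes_{\max} A^{\mathrm{op}}$, and a standard-form-type representation of $A \otimes A^{\mathrm{op}}$ on $L^2(A, \phi)$ (for a faithful state $\phi$) encodes the required positivity data into the bimodule picture, so that the coincidence-of-crossed-products hypothesis for $A \otimes A^{\mathrm{op}}$ supplies the desired positive-type net.

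The main obstacle I expect is precisely this extraction step, both in producing the net of finitely supported positive-type $Z(A^{\ast\ast})$-valued functions and, in case~(ii), in correctly disentangling the $\Gamma$-equivariance of the $A \otimes A^{\mathrm{op}}$ structure from the bimodule positivity. The argument must avoid any equivariant mean on $\ell^\infty(\Gamma)$ and instead leverage exactness of $\Gamma$ in combination with either commutativity or the $A \otimes A^{\mathrm{op}}$ hypothesis to push the norm-equality of full and reduced crossed products into the required approximating net.
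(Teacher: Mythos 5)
Your high-level reduction is the right one and matches the paper's: show $A \rtimes_r \Gamma$ is nuclear, invoke Anantharaman--Delaroche's equivalence, and exploit the fact that exactness of $\Gamma$ makes the diagonal action on $\ell^\infty\Gamma \otimes A$ amenable, so that $(\ell^\infty\Gamma \otimes A)\rtimes_r\Gamma$ is nuclear and can serve as a nuclear intermediary. You also correctly diagnose that the whole difficulty is the ``way back'': a genuinely $\Gamma$-equivariant conditional expectation $\ell^\infty\Gamma \otimes A \to A$ cannot exist for non-amenable $\Gamma$, so some substitute must be extracted from the hypothesis that the full and reduced crossed products coincide. But at exactly this point your proposal stops: you say that commutativity (resp.\ the $A\otimes A^{\mathrm{op}}$ hypothesis) ``allows a Haagerup-type construction'' of the approximating net, without saying how the norm equality $A\rtimes\Gamma = A\rtimes_r\Gamma$ is actually consumed. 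That is the entire content of the theorem, and you acknowledge it as ``the main obstacle I expect.'' So the proposal has a genuine gap, not a complete alternative argument.

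The mechanism the paper uses is worth internalizing, since it is short once seen. The covariant representation of $A$ on its Haagerup standard form $B(H)$ gives a $\ast$-homomorphism $\pi$ on the \emph{full} crossed product; the hypothesis identifies this with a representation of $A\rtimes_r\Gamma \subseteq B(\ell^2\Gamma\otimes H)$, which by Arveson extends to a ucp map $\tilde\pi\colon B(\ell^2\Gamma\otimes H)\to B(H)$. Restricting $\tilde\pi$ to $\ell^\infty\Gamma\otimes A$ and running the multiplicative-domain argument (with $1\otimes A$ and the unitaries $\lambda_s\otimes u_s$ in the multiplicative domain) forces the range into $A' = A^{\ast\ast}$ (using commutativity, or the $JAJ$-commutant in case (ii)) and gives equivariance. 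Note that the resulting map lands in $A^{\ast\ast}$, not in $A$, so your hoped-for ``ucp left inverse'' of $\pi\rtimes\Gamma$ into $A\rtimes_r\Gamma$ is not what one gets; one instead needs a second lemma (the equivariant analogue of ``exactness implies property $C'$'') producing a $\ast$-homomorphism $A^{\ast\ast}\rtimes_r\Gamma \to (A\rtimes_r\Gamma)^{\ast\ast}$, so that the inclusion of $A\rtimes_r\Gamma$ into its double dual factors through the nuclear algebra $(\ell^\infty\Gamma\otimes A)\rtimes_r\Gamma$, whence $A\rtimes_r\Gamma$ is nuclear. Your plan does not address either of these two steps, and without them the argument does not close.
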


The consequence is natural since if the algebra is the field of complex numbers with a trivial group action, then the full and reduced crossed products are the full and reduced group algebras, and in this case it was already proved by Hulanicki in 1966 \cite{Hu}. However, in our case, the method is completely different to the one of Hulanicki. We use two properties related with the property $C'$ of R. J. Archbold and C. J. K. Batty \cite{AB} and the weak expectation property (WEP) of E. C. Lance \cite{L}.

Moreover, the assumption is also natural since it is known that a group is exact if and only if it acts amenably on a unital commutative $C^\ast$-algebra \cite{GK} \cite{O}.

Finally, we remark that the commutative case of our theorem can also be interpreted as a statement of locally compact \'etale groupoids which also have associated full and reduced $C^\ast$-algebras and amenability (cf: \cite{ADR}).

\section{Preliminaries}
In this section, we recall basic definitions and facts.

For a (discrete) group $\Gamma$, $\Gamma$-$C^\ast$-algebra $(A, \alpha)$ is a $C^\ast$-algebra $A$ with an action $\alpha$ of $\Gamma$ by $\ast$-automorphisms. We always consider $A^{\ast\ast}$ with a natural $\Gamma$-action, also denoted by $\alpha$, which is the ultraweakly continuous extension of the original $\alpha$. We also give the opposite $A^{\rm op}$ of $A$ an associated action $\alpha^{\rm op}$ such that $\alpha^{\rm op}_s (a^{\rm op}) = \alpha_s (a)^{\rm op}$ for any $a^{\rm op} \in A^{\rm op}$ and any $s \in \Gamma$. If $(B, \beta)$ is another $\Gamma$-$C^\ast$-algebra, then the minimal tensor product $A \otimes B$ naturally has the diagonal action $\alpha \otimes \beta$ of $\Gamma$, that is, $(\alpha \otimes \beta)_s = \alpha_s \otimes \beta_s$ for any $s \in \Gamma$. For a family of $\Gamma$-$C^\ast$-algebras $\{ (A_i, \alpha_i) \}_i$, its direct product $\prod A_i$ has a $\Gamma$-action $(\prod \alpha_i)_s = \prod \alpha_{i, s}$. We will omit $\alpha$ in the notation if no confusion occurs.

For a $\Gamma$-$C^\ast$-algebra $(A, \alpha)$, its algebraic crossed product $A \rtimes_{\rm alg} \Gamma$ is a $\ast$-algebra which consists of finitely supported $A$-valued functions on $\Gamma$ with twisted convolution product and involution, that is, $(\sum a_s \delta_s)(\sum b_t\delta_t)$ is defined to be $\sum a_s \alpha_s b_t\delta_{st}$ and the involution sends $\sum a_s \delta_s$ to $\sum \alpha_{s^{-1}} a_s^\ast \delta_{s^{-1}}$, where $a\delta_s$ denotes a $A$-valued function on $\Gamma$ which sends $s$ to $a$ and others to $0$. We may regard $A$ as a subalgebra of $A \rtimes_{\rm alg} \Gamma$ by $a \mapsto a\delta_e$ and $\Gamma$ as a subset of it by $s \mapsto 1 \delta_s$ if $A$ is unital.

The full crossed product of $A$ is the universal enveloping $C^\ast$-algebra of $A \rtimes_{\rm alg} \Gamma$. To define the reduced crossed product of $A$, assume that $A$ is embedded into some $B(H)$ with a unitary representation $u$ of $\Gamma$ on $H$ which implements the $\Gamma$-action on $A$, that is, $\alpha_s a = u_s a u_s^\ast$ for any $a \in A \subseteq B(H)$ and $s \in \Gamma$. Such an embedding always exists. Then the reduced crossed product of $A$ is defined to be the closure of $A \rtimes_{\rm alg} \Gamma \subseteq B(\ell^2 \Gamma \otimes H)$, where $a\delta_s$ acts on $\ell^2 \Gamma \otimes H$ as $a\delta_s (\xi \otimes \eta) = \lambda_s \xi \otimes au_s \eta$. The left regular representation $\Gamma \curvearrowright \ell^2 \Gamma$ is denoted as $\lambda$, that is, $(\lambda_s \xi)(t) = \xi (s^{-1}t)$ for any $s, t \in \Gamma$ and any $\xi \in \ell^2 \Gamma$. In fact, the definition does not depend on the choice of the embedding $A \subseteq B(H)$. 

The following is a central concept in this paper.

\begin{dfnthm}[\cite{AD}] A group action of $\Gamma$ on a $C^\ast$-algebra $A$ is said to be amenable if there exist a net of finitely supported positive definite functions $h_i : \Gamma \rightarrow Z(A^{\ast\ast})$ such that $h_i (e) \leq 1$ for any $i$ and $h_i (s)$ converges to $1$ ultraweakly for each $s \in \Gamma$.
If $A$ is nuclear as a $C^\ast$-algebra, then it is equivalent to saying that the reduced crossed product $A \rtimes_r \Gamma$ is nuclear. 
\end{dfnthm}

Here a map $h : \Gamma \rightarrow Z(A^{\ast\ast})$ is said to be positive definite if a matrix $[\alpha_{s_i} h(s_i^{-1} s_j)]_{ij}$ is positive for each finitely many elements $s_1,...,s_n$ in $\Gamma$.

The most important example of an amenable action in this paper comes from an exact group.

\begin{dfnthm}[\cite{KW}\cite{GK}\cite{O}] A group $\Gamma$ is said to be exact if for any exact sequence of $\Gamma$-$C^\ast$-algebras
\[ 0 \rightarrow J \rightarrow A \rightarrow A/J \rightarrow 0 \]
the associated sequence of $C^\ast$-algebras
\[ 0 \rightarrow J \rtimes_r \Gamma \rightarrow A \rtimes_r \Gamma \rightarrow A/J \rtimes_r \Gamma \rightarrow 0 \]
is also exact.

It is equivalent to saying that the left multiplication action $\Gamma \curvearrowright \ell^\infty \Gamma$ is amenable.
\end{dfnthm}

In this case, we have a net of positive definite maps in the definition of the amenable action with ranges contained in $\ell^\infty \Gamma$, not only its double dual (see Th\'eor\`eme 4.9 of \cite{AD}). Hence we can conclude that the diagonal action $\Gamma \curvearrowright \ell^\infty \Gamma \otimes A$ is also amenable for any unital $\Gamma$-$C^\ast$-algebras $A$.

The next ingredient is the Haagerup standard form, but we state only a part of the result which is sufficient for our purpose.

\begin{thm}[\cite{Ha}] For any $\Gamma$-$C^\ast$-algebra $(A, \alpha)$, there exist a faithful normal representation of $A^{\ast\ast}$ into $B(H)$, a conjugate linear isometric involution $J$ on $H$ and a unitary representation $u$ of $\Gamma$ on $H$ satisfying the following conditions.
\begin{itemize}
\item[{\rm (i)}] $J A^{\ast\ast} J = (A^{\ast\ast})'$.
\item[{\rm (ii)}] $J z J = z^\ast$ for any element $z$ in the center of $A^{\ast\ast}$.
\item[{\rm (iii)}] $J = u_s J u_s^\ast$ for any $s \in \Gamma$.
\item[{\rm (iv)}] $\alpha_s a = u_s a u_s^\ast$ for any $a \in A$ and $s \in \Gamma$.
\end{itemize}
\end{thm}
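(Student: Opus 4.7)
The plan is to reduce this to the standard form theorem for von Neumann algebras as originally established by Haagerup, and to extract the four listed properties from the standard form data.

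First I would pass from the $C^\ast$-algebra $A$ to its enveloping von Neumann algebra $M = A^{\ast\ast}$, on which the $\Gamma$-action extends uniquely to an ultraweakly continuous action by $\ast$-automorphisms, as already noted in the Preliminaries. The goal then becomes to produce a standard form of $M$ together with a unitary implementation of this $\Gamma$-action that is compatible with the modular conjugation.

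Next I would invoke Haagerup's theorem on standard forms: there exists a faithful normal representation $M \subseteq B(H)$, an antiunitary involution $J$ on $H$, and a self-dual positive cone $\mathcal{P} \subseteq H$ such that $JMJ = M'$, $JzJ = z^\ast$ for $z$ in the center of $M$, $J\xi = \xi$ for all $\xi \in \mathcal{P}$, and $aJaJ\mathcal{P} \subseteq \mathcal{P}$ for all $a \in M$. The first two properties of the standard form yield precisely conditions (i) and (ii) of the theorem. The crucial additional feature of the standard form is its rigidity: every $\ast$-automorphism $\beta$ of $M$ is implemented by a \emph{unique} unitary $U(\beta)$ on $H$ satisfying $U(\beta)\mathcal{P} = \mathcal{P}$, and the assignment $\beta \mapsto U(\beta)$ is a group homomorphism from $\mathrm{Aut}(M)$ to the unitary group of $H$.

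Setting $u_s := U(\alpha_s)$ for $s \in \Gamma$ would then give a unitary representation of $\Gamma$ on $H$, and by construction $\alpha_s a = u_s a u_s^\ast$ for every $a \in M$, which is condition (iv). To obtain condition (iii), I would exploit the fact that $u_s$ preserves $\mathcal{P}$ and that $J$ is the unique antiunitary involution that both implements the canonical anti-isomorphism $M \to M'$ via $a \mapsto JaJ$ and fixes $\mathcal{P}$ pointwise. Since $u_s J u_s^\ast$ is antiunitary, fixes $u_s\mathcal{P} = \mathcal{P}$ pointwise, and implements the same anti-isomorphism on $M$ (as one checks from $u_s M u_s^\ast = M$ together with $u_s M' u_s^\ast = M'$), uniqueness forces $u_s J u_s^\ast = J$.

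I expect the only subtlety to be the verification that $s \mapsto u_s$ is genuinely a group homomorphism rather than merely a projective one; this is handled by the uniqueness clause in the standard form theorem, since both $u_s u_t$ and $u_{st}$ are unitaries that implement $\alpha_{st}$ and preserve $\mathcal{P}$. Beyond this, all four properties fall out directly from the standard form package, so no further analytic work is required.
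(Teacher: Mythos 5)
Your proposal is correct and is essentially the argument the paper intends: the statement is quoted from Haagerup's paper without proof, and it follows exactly as you describe from the existence of the standard form $(M,H,J,\mathcal{P})$ for $M=A^{\ast\ast}$ together with Haagerup's theorem that each automorphism is implemented by a unique unitary preserving the cone $\mathcal{P}$, the uniqueness giving both that $s\mapsto u_s$ is a genuine homomorphism and (via $u_s\mathcal{P}=\mathcal{P}$ and conjugate-linearity) that $u_sJu_s^\ast=J$.
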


We observe that if $A$ is commutative, then $A^{\ast\ast}$ is maximally abelian in the standard form by (i) and (ii) of the theorem. We also see that $a^{\rm op} \mapsto J a^\ast J$ defines a universal representation of the opposite $A^{\rm op}$ of $A$. The weak closure of its range is the commutant of $A^{\ast\ast}$ by (i) and $u$ implements the naturally induced $\Gamma$-action on $A^{\rm op}$ by (iii) and (iv).

Finally, we recall multiplicative domains of completely positive (cp) maps (see, for example, Proposition 1.5.7 of \cite{BO}).

\begin{dfnprop} For a cp map $\varphi : A \rightarrow B$, its multiplicative domain $A_{\varphi}$ is defined as $A_{\varphi} = \{ a \in A : \varphi (aa^\ast) = \varphi (a) \varphi (a)^\ast \ {\rm and} \ \varphi (a^\ast a) = \varphi (a)^\ast \varphi (a) \}$. Then $\varphi$ satisfies $\varphi (ab) = \varphi (a) \varphi (b)$ and $\varphi (ba) = \varphi (b) \varphi (a)$ for any $a \in A_{\varphi}$ and $b \in A$. 
\end{dfnprop}

We will use this Proposition for cp maps which is an extension of a $\ast$-homomorphism. Then the domain of $\ast$-homomorphism is contained in the multiplicative domain of the cp map.

\section{Proof of the main theorem}

Let us start to prove the theorem. We need two lemmata to prove the theorem. These are related with the property $C'$ of Archbold and Batty and the WEP of Lance as already mentioned in the introduction.

The proof and the consequence of the first lemma is similar to the one of that exactness implies property $C'$ for $C^\ast$-algebras (cf: Prop 9.2.7 of \cite{BO}).
\begin{lem}
For any exact group $\Gamma$ and any $\Gamma$-$C^\ast$-algebra $A$, the natural map $A^{\ast\ast} \rtimes_{\rm alg} \Gamma \rightarrow (A \rtimes_r \Gamma)^{\ast\ast}$ extends to a $\ast$-homomorphism on $A^{\ast\ast} \rtimes_r \Gamma$.
\end{lem}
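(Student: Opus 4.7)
The plan has two main components: constructing the $*$-homomorphism $\pi\colon A^{**}\rtimes_{\mathrm{alg}}\Gamma\to(A\rtimes_r\Gamma)^{**}$ and then using the exactness of $\Gamma$ to show that it extends continuously to $A^{**}\rtimes_r\Gamma$.

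For the construction, the composition $A\hookrightarrow A\rtimes_r\Gamma\hookrightarrow(A\rtimes_r\Gamma)^{**}$ is a $*$-homomorphism into a von Neumann algebra, so by universality of the second dual it extends uniquely to a normal $*$-homomorphism $\tilde\iota\colon A^{**}\to(A\rtimes_r\Gamma)^{**}$. Together with the canonical unitaries $\delta_s\in M(A\rtimes_r\Gamma)\subseteq(A\rtimes_r\Gamma)^{**}$, which implement the $\Gamma$-action on $A$ and, by ultraweak continuity of $\tilde\iota$, also on $A^{**}$, this forms a covariant representation yielding $\pi$ on the algebraic crossed product.

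For the continuity I would follow the strategy of Brown--Ozawa's Proposition 9.2.7, that exactness implies property $C'$. The crucial input is the amenability of the diagonal action $\Gamma\curvearrowright\ell^\infty\Gamma\otimes A^{**}$ recalled in the preliminaries, which implies that its full and reduced crossed products coincide. Via the $\Gamma$-equivariant constant embedding $A^{**}\hookrightarrow\ell^\infty\Gamma\otimes A^{**}$ one obtains an isometric inclusion $A^{**}\rtimes_r\Gamma\hookrightarrow(\ell^\infty\Gamma\otimes A^{**})\rtimes_r\Gamma=(\ell^\infty\Gamma\otimes A^{**})\rtimes\Gamma$, reducing the problem to constructing a $*$-homomorphism from the right-hand side into $(A\rtimes_r\Gamma)^{**}$ that restricts to $\pi$ on $A^{**}\rtimes_{\mathrm{alg}}\Gamma$.

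The main obstacle, and the technical heart of the proof, is the construction of this last $*$-homomorphism. By universality of the full crossed product, it suffices to produce a covariant representation of $(\ell^\infty\Gamma\otimes A^{**},\Gamma)$ in $(A\rtimes_r\Gamma)^{**}$ extending $(\tilde\iota,\delta)$; i.e., a copy of $\ell^\infty\Gamma$ inside $(A\rtimes_r\Gamma)^{**}$ commuting with $\tilde\iota(A^{**})$ and translated by conjugation under $\delta_s$. In the spatial realization of $A\rtimes_r\Gamma$ on $\ell^2\Gamma\otimes H$ coming from the Haagerup standard form, $\ell^\infty\Gamma\otimes 1$ lies in the commutant; lifting this copy to the abstract bidual is the delicate point, to be handled in analogy with the slice-map / Kaplansky density argument in the Brown--Ozawa proof.
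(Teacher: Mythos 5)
Your first step (extending $A\hookrightarrow(A\rtimes_r\Gamma)^{\ast\ast}$ to a normal $\ast$-homomorphism on $A^{\ast\ast}$ and pairing it with the canonical unitaries to get $\pi$ on $A^{\ast\ast}\rtimes_{\rm alg}\Gamma$) agrees with the paper. The continuity step, however, contains a genuine gap: you reduce the problem to producing a covariant $\ast$-representation of $(\ell^\infty\Gamma\otimes A^{\ast\ast},\Gamma)$ \emph{inside} $(A\rtimes_r\Gamma)^{\ast\ast}$ that extends $(\tilde\iota,\delta)$, and such a covariant representation does not exist in general. Take $\Gamma=\mathbb{Z}$ and $A=\mathbb{C}$: then $(A\rtimes_r\Gamma)^{\ast\ast}=C(\mathbb{T})^{\ast\ast}$ is commutative, so conjugation by the images of the $\delta_n$ is trivial, and covariance would force a unital $\ast$-homomorphism $\ell^\infty\mathbb{Z}=C(\beta\mathbb{Z})\to C(\mathbb{T})^{\ast\ast}$ that is invariant under translation; dually this is a continuous map from a hyperstonean space into the fixed-point set of $\mathbb{Z}\curvearrowright\beta\mathbb{Z}$, which is empty because that action is free. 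So the copy of $\ell^\infty\Gamma\otimes 1$ visible in the spatial picture on $\ell^2\Gamma\otimes H$ cannot be ``lifted'' to the universal bidual: $(A\rtimes_r\Gamma)''$ in a given representation is a quotient of $(A\rtimes_r\Gamma)^{\ast\ast}$, not a subalgebra, and the universal property of the full crossed product demands a genuine covariant representation, not a ucp map. The delicate point you deferred is therefore not a technicality but an obstruction.

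The paper's proof uses exactness in a different way, namely through the exactness of a short exact sequence of reduced crossed products rather than through amenability of $\Gamma\curvearrowright\ell^\infty\Gamma$. One forms the $C^\ast$-algebra $A_I$ of strong$^\ast$-convergent nets in $\prod_{i\in I}A$, which surjects onto $A^{\ast\ast}$ by Kaplansky density via the limit map $\pi$. The induced map $A_I\rtimes_{\rm alg}\Gamma\to(A\rtimes_r\Gamma)^{\ast\ast}$ is contractive for the reduced norm because $(\prod_i A_i)\rtimes_r\Gamma$ embeds into $\prod_i(A_i\rtimes_r\Gamma)$, so it extends to $A_I\rtimes_r\Gamma$ and vanishes on $(\ker\pi)\rtimes_r\Gamma$; exactness of $\Gamma$ then identifies $A^{\ast\ast}\rtimes_r\Gamma$ with $(A_I\rtimes_r\Gamma)/((\ker\pi)\rtimes_r\Gamma)$, yielding the desired $\ast$-homomorphism. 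If you want to salvage your outline, you should replace the covariant-representation step with this quotient argument (which is exactly the mechanism of Brown--Ozawa, Proposition 9.2.7, that you cite).
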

\begin{proof}
First we remark that the natural inclusion $A \hookrightarrow A \rtimes_r \Gamma \hookrightarrow (A \rtimes_r \Gamma)^{\ast\ast}$ extends to $A^{\ast\ast} \rightarrow (A \rtimes_r \Gamma)^{\ast\ast}$ which induces a map $A^{\ast\ast} \rtimes_{\rm alg} \Gamma \rightarrow (A \rtimes_r \Gamma)^{\ast\ast}$ from an algebraic crossed product.
Let 
$$A_I = \{(x_i)_i \in \prod_{i \in I} A_i : (x_i)_i {\rm \ is \ strong^\ast \mathchar`-convergent \ in } \ A^{\ast\ast} \}$$
 for a directed set $I$ where $A_i$ is a copy of $A$. This is a $\Gamma$-invariant $C^{\ast}$-subalgebra of $\prod_{i \in I} A_i$. The $\ast$-homomorphism $\pi : A_I \rightarrow A^{\ast\ast}$, assigning a net to its strong$^\ast$-limit, is surjective for a sufficiently large $I$ by Kaplansky's density theorem. Fix such a set $I$. Then the natural map $\tilde{\pi} : A_I \rtimes_{\rm alg} \Gamma \subseteq A^{\ast\ast} \rtimes_{\rm alg} \Gamma \rightarrow (A \rtimes_r \Gamma)^{\ast\ast}$ is continuous in the topology of  $A_I \rtimes_r \Gamma$ since for any $\sum_{k = 1}^n (x_i^k)_i  \delta_{s_k} \in A_I \rtimes_{\rm alg} \Gamma$ we have

\begin{align*}
\| \tilde{\pi} ( \sum_{k = 1}^n (x_i^k)_i \delta_{s_k} ) \| &= \| {\rm strong^\ast \mathchar`-}\lim_i \sum_{k = 1}^n x_i^k \delta_{s_k} \| \\
&\leq \sup_i \| \sum_{k = 1}^n x_i^k \delta_{s_k} \| \\
&= \| \sum_{k = 1}^n (x_i^k)_i \delta_{s_k} \|. 
\end{align*}

The last equality follows from the fact that there exists a natural embedding $(\prod_{i \in I} A_i) \rtimes_r \Gamma \hookrightarrow \prod_{i \in I} (A_i \rtimes_r \Gamma)$, $\sum_k (x_i^k)_i \delta_{s_k} \mapsto (\sum_k x_i^k \delta_{s_k})_i$. 
This is well-defined and injective since if $A_i \subseteq B(H_i)$, then $(\prod_{i \in I} A_i) \rtimes_r \Gamma \subseteq B(\ell^2 \Gamma \otimes (\oplus H_i))$ and $\prod_{i \in I} (A_i \rtimes_r \Gamma) \subseteq B(\oplus (\ell^2 \Gamma \otimes H_i))$ and an obvious unitary implements the embedding.
Thus we have a $\ast$-homomorphism $A_I \rtimes_r \Gamma \rightarrow (A \rtimes_r \Gamma)^{\ast\ast}$ which vanishes on $({\rm ker} \pi) \rtimes_r \Gamma$. Since we have assumed that $\Gamma$ is exact, we have $A^{\ast\ast} \rtimes_r \Gamma \simeq (A_I \rtimes_r \Gamma)/(({\rm ker} \pi) \rtimes_r \Gamma) \rightarrow (A \rtimes_r \Gamma)^{\ast\ast}$.
\end{proof}

The second lemma corresponds in the non-equivariant case to a statement that nuclearity implies the WEP. In the noncommutative case, we need a possibly stronger assumption.

\begin{lem}
Let $(A, \alpha)$ be a unital $\Gamma$-$C^\ast$-algebra. Assume that one of the following conditions is satisfied.

\begin{itemize}
\item[{\rm (i)}] The full and reduced crossed products of $A$ by $\Gamma$ coincide and $A$ is commutative.
\item[{\rm (ii)}] The full and reduced crossed products of $A \otimes A^{\rm op}$ by $\Gamma$ coincide and $A$ is nuclear. Here $A^{\rm op}$ is the opposite of $A$.
\end{itemize}

Then there exists a $\Gamma$-equivariant cp map $\ell^\infty \Gamma \otimes A \rightarrow A^{\ast\ast}$ which is the identity on $A$, where $A$ is considered to be embedded into $\ell^\infty \Gamma \otimes A$ as $\mathbb{C} 1_{\ell^\infty \Gamma}  \otimes A$
\end{lem}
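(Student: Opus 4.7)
The plan is to realize $A^{\ast\ast}$ concretely as operators on a Hilbert space via the Haagerup standard form, to use the coincidence of crossed products to represent the reduced crossed product on that Hilbert space, and then to extract the required equivariant cp map by slicing back to $A^{\ast\ast}$ using its injectivity together with a multiplicative-domain argument.

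First, I would apply the Haagerup theorem to $A$, obtaining a faithful representation $A^{\ast\ast} \subseteq B(H)$, the conjugate-linear involution $J$, and the implementing unitary representation $u$. In case {\rm (i)}, $A$ commutative makes $A^{\ast\ast}$ maximally abelian in $B(H)$, so $(A \subseteq B(H), u)$ is a covariant pair and the hypothesis that the full and reduced crossed products of $A$ coincide yields a $\ast$-homomorphism $\pi \colon A \rtimes_r \Gamma \to B(H)$. In case {\rm (ii)}, the faithful representation $b^{\rm op} \mapsto J b^\ast J$ of $A^{\rm op}$ on $H$ has image commuting with $A \subseteq B(H)$; nuclearity of $A$ lifts this to a $\ast$-homomorphism $A \otimes A^{\rm op} \to B(H)$, and together with $u$ (using the equivariance $u_s J = J u_s$) one gets a covariant pair of $(A \otimes A^{\rm op}, \Gamma)$, yielding by the hypothesis a $\ast$-homomorphism $\pi \colon (A \otimes A^{\rm op}) \rtimes_r \Gamma \to B(H)$.

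Next, I would build the ucp map $\Phi \colon \ell^\infty \Gamma \otimes A \to A^{\ast\ast}$ by combining $\pi$ with the covariant representation of $(\ell^\infty \Gamma \otimes A, \Gamma)$ on $\ell^2 \Gamma \otimes H$ sending $f \otimes a \mapsto m_f \otimes a$ and $s \mapsto \lambda_s \otimes u_s$, and then slicing back to $A^{\ast\ast}$ via a ucp map supplied by the injectivity of $A^{\ast\ast}$ (automatic in case {\rm (i)}, since $A^{\ast\ast}$ is abelian; automatic in case {\rm (ii)}, since $A^{\ast\ast}$ is the double dual of a nuclear $C^\ast$-algebra). The identity-on-$A$ property will be enforced by ensuring $A$ lies in the multiplicative domain of the construction, which holds because $\pi|_A$ is the identity on $A$. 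In case {\rm (ii)}, the image of $A^{\rm op}$ under $\pi$ lies in $(A^{\ast\ast})' = J A^{\ast\ast} J$, and the multiplicative-domain argument should force the image of the final slice into $A^{\ast\ast} \cap (J A^{\ast\ast} J)' = A^{\ast\ast}$, which is precisely the reason $A^{\rm op}$ appears in the hypothesis.

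The main technical obstacle is to secure $\Gamma$-equivariance of the resulting $\Phi$. The covariance of $\pi$ makes the construction nearly equivariant, but pinning down exact equivariance will require either producing a $\Gamma$-equivariant slice (exploiting the $J$-equivariance $u_s J = J u_s$ from the standard form) or averaging a nonequivariant candidate over the convex compact set of ucp extensions of $\mathrm{id}_A$, extracted by a Banach--Alaoglu compactness argument. Simultaneously securing equivariance and landing in $A^{\ast\ast}$, especially in case {\rm (ii)}, is where the multiplicative-domain argument combined with the specific geometry of the standard form will do the heavy lifting.
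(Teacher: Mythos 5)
Your proposal is essentially the paper's proof: Haagerup standard form, the coincidence hypothesis producing a $\ast$-homomorphism $\pi$ out of the reduced crossed product, extension by injectivity, restriction to $\ell^\infty\Gamma\otimes A$ (tensored with $\mathbb{C}1_{A^{\rm op}}$ in case (ii)), and the multiplicative-domain argument to push the range into $A^{\ast\ast}$. The one point you leave open, equivariance, is not actually an obstacle: since $A$ is unital, the unitaries $\lambda_s\otimes u_s$ (the images of $1\delta_s$ in $A\rtimes_r\Gamma$, resp.\ in $(A\otimes A^{\rm op})\rtimes_r\Gamma$) lie in the multiplicative domain of the extension and are sent to $u_s$, and conjugation by them implements the $\Gamma$-action on $\ell^\infty\Gamma\otimes A$, so equivariance is immediate; you should drop the fallback of averaging over ucp extensions, since any such fixed-point/invariant-mean argument would require amenability of $\Gamma$, which is unavailable in the cases of interest.
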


\begin{proof}
Let $A^{\ast\ast} \subseteq B(H)$ be the Haagerup standard form of $A^{\ast\ast}$. Note that $(A^{\rm op})^{\ast\ast}$ has a natural embedding into $B(H)$ such that $(A^{\rm op})^{\ast\ast} = (A^{\ast\ast})'$ and the $\Gamma$-actions on $A$ and $A^{\rm op}$ are implemented by a unitary representation $u$ of $\Gamma$ on $H$ as explained in the previous section. 
\begin{itemize}
\item[(i)] First, assume that the full and reduced crossed products of $A$ by $\Gamma$ coincide and $A$ is commutative. Consider a representation $\pi : A \rtimes_r \Gamma = A \rtimes \Gamma \rightarrow B(H)$ induced by the covariant representation. 
On the other hand, $A \rtimes_r \Gamma$ is embedded in $B(\ell^2 \Gamma \otimes H)$ so that $a\delta_s$ acts as $\lambda_s \otimes a u_s$ for any $a \in A$ and any $s \in \Gamma$.
By Arveson's extension theorem we have a cp map $\tilde{\pi} : B(\ell^2 \Gamma \otimes H) \rightarrow B(H)$ which restricts to $\pi$ on $A \rtimes_r \Gamma$.
Then the restriction of the cp map to $\ell^\infty \Gamma \otimes A \subseteq B(\ell^2 \Gamma \otimes H)$, where elements of $\ell^\infty \Gamma$ act on $\ell^2 \Gamma$ as multiplication operators, does the work for the following reason. First, note that any $x \in \ell^\infty \Gamma \otimes A$ commutes with $1 \otimes a \in 1 \otimes A$ and the latter is in the multiplicative domain of $\tilde{\pi}$. Hence we have $a \tilde{\pi}(x) = \tilde{\pi}((1 \otimes a) x) = \tilde{\pi}(x (1 \otimes a)) = \tilde{\pi}(x) a$, that is, $\tilde{\pi}( \ell^\infty \Gamma \otimes A) \subseteq A' = A^{\ast\ast}$ since $A$ is commutative. To show that the cp map $\tilde{\pi} : \ell^\infty \Gamma \otimes A \rightarrow A$ is $\Gamma$-equivariant, we only need to know that, by unitarity of $A$, $\lambda_s \otimes u_s \in B(\ell^2 \Gamma \otimes H)_{\tilde{\pi}}$, which is sent to $u_s$ by $\tilde{\pi}$ and $\lambda$ implements the left multiplication action $\Gamma \curvearrowright \ell^\infty \Gamma$. Indeed, we have $\tilde{\pi}(s(f \otimes a)) = \tilde{\pi}((\lambda_s \otimes u_s)(f \otimes a)(\lambda_s \otimes u_s)^\ast) = u_s \tilde{\pi}(f \otimes a) u_s^\ast = \alpha_s \tilde{\pi}(f \otimes a)$ for any $f \in \ell^\infty \Gamma$, $a \in A$ and $s \in \Gamma$.
\item[(ii)] In this case, consider a natural representation $\pi : (A \otimes A^{\rm op}) \rtimes_r \Gamma = (A \otimes A^{\rm op}) \rtimes \Gamma \rightarrow B(H)$, $(a \otimes b^{\rm op})\delta_s \mapsto a J b^\ast J u_s$, induced by the Haagerup standard form of $A$. This extends to a cp map $\tilde{\pi} : ((\ell^\infty \Gamma \otimes A) \otimes A^{\rm op}) \rtimes_r \Gamma \rightarrow B(H)$ whose restriction to $\ell^\infty \Gamma \otimes A \otimes \mathbb{C} 1_{A^{\rm op}} \simeq \ell^\infty \Gamma \otimes A$ gives us a desired $\Gamma$-equivariant cp map, using the similar ``multiplicative domian argument''. Namely, apply it twice to reduce its range and to show that it is $\Gamma$-equivariant. Reduction follows from the fact that $1 \otimes 1 \otimes a^{\rm op} \in \ell^\infty \Gamma \otimes A \otimes A^{\rm op} \subseteq (\ell^\infty \Gamma \otimes A \otimes A^{\rm op}) \rtimes_r \Gamma$ is in the multiplicative domain. To show that the cp map is equivariant, we see that $s \in \Gamma \subseteq (\ell^\infty \Gamma \otimes A \otimes A^{\rm op}) \rtimes_r \Gamma$ is in the multiplicative domain. 
\end{itemize}

\end{proof}

Now, we recall and prove the main theorem.

\begin{thm}
Assume that $A$ is a $\Gamma$-$C^\ast$-algebra satisfying the conditions of the last lemma. Moreover, assume that $\Gamma$ is exact. Then $A \rtimes_r \Gamma$ is nuclear. In other words the action $\Gamma \curvearrowright A$ is amenable.
\end{thm}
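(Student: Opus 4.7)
My plan is to deduce nuclearity of $A\rtimes_r\Gamma$ by exhibiting its canonical embedding into $(A\rtimes_r\Gamma)^{\ast\ast}$ as factoring, in an approximate sense, through a nuclear $C^\ast$-algebra, and then transferring the completely positive approximation property. The nuclear algebra will be the crossed product of the amenable action $\Gamma\curvearrowright\ell^\infty\Gamma\otimes A$, and the factoring cp map is assembled from Lemma 2 (to descend equivariantly from $\ell^\infty\Gamma\otimes A$ to $A^{\ast\ast}$) together with Lemma 1 (to land inside the bidual of $A\rtimes_r\Gamma$).

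First, exactness of $\Gamma$ gives amenability of $\Gamma\curvearrowright\ell^\infty\Gamma$, and tensoring the $\ell^\infty\Gamma$-valued positive definite functions with $1_A$ shows amenability of the diagonal action $\Gamma\curvearrowright\ell^\infty\Gamma\otimes A$. Consequently $(\ell^\infty\Gamma\otimes A)\rtimes\Gamma=(\ell^\infty\Gamma\otimes A)\rtimes_r\Gamma$, and since $\ell^\infty\Gamma\otimes A$ is nuclear in both cases (i) and (ii), this common crossed product is nuclear.

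Next, the $\Gamma$-equivariant cp map $\Phi\colon\ell^\infty\Gamma\otimes A\to A^{\ast\ast}$ of Lemma 2 integrates to a cp map of full crossed products $\Phi\rtimes\Gamma\colon(\ell^\infty\Gamma\otimes A)\rtimes\Gamma\to A^{\ast\ast}\rtimes\Gamma$; composing with the canonical quotient $A^{\ast\ast}\rtimes\Gamma\to A^{\ast\ast}\rtimes_r\Gamma$ and with the $\ast$-homomorphism $A^{\ast\ast}\rtimes_r\Gamma\to(A\rtimes_r\Gamma)^{\ast\ast}$ from Lemma 1 gives a cp map
\[
\Psi\colon(\ell^\infty\Gamma\otimes A)\rtimes_r\Gamma\longrightarrow(A\rtimes_r\Gamma)^{\ast\ast}.
\]
Because $\Phi(1\otimes a)=a$ and the subsequent maps act as the identity on generators $a\delta_s$, the restriction of $\Psi$ to the natural copy of $A\rtimes_r\Gamma$ (arising from $A\hookrightarrow 1\otimes A$) will be the canonical embedding into the bidual.

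Finally, nuclearity of $(\ell^\infty\Gamma\otimes A)\rtimes_r\Gamma$ provides finite-rank cp approximations $\psi_\alpha\circ\phi_\alpha\to\mathrm{id}$ factoring through matrix algebras; restricting $\phi_\alpha$ to $A\rtimes_r\Gamma$ and post-composing $\psi_\alpha$ with $\Psi$ yields finite-rank cp factorizations of the embedding $A\rtimes_r\Gamma\hookrightarrow(A\rtimes_r\Gamma)^{\ast\ast}$ through matrix algebras, converging in point-norm on $A\rtimes_r\Gamma$. Extending the first factors normally to the bidual (possible because the targets are finite-dimensional) produces normal finite-rank cp approximations of $\mathrm{id}_{(A\rtimes_r\Gamma)^{\ast\ast}}$, giving injectivity of the bidual and hence nuclearity of $A\rtimes_r\Gamma$; by the Anantharaman-Delaroche characterization recalled in Section 2, this is equivalent to amenability of $\Gamma\curvearrowright A$. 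The main obstacle I expect is exactly this last transfer step, where point-norm convergence on the $C^\ast$-subalgebra must be upgraded to a point-$\sigma$-weak witness on all of $(A\rtimes_r\Gamma)^{\ast\ast}$; a standard Kaplansky density argument together with uniform boundedness of the approximants should handle it.
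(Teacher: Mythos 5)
Your proposal is correct and follows essentially the same route as the paper: both factor the canonical inclusion $A \rtimes_r \Gamma \hookrightarrow (A \rtimes_r \Gamma)^{\ast\ast}$ through the nuclear algebra $(\ell^\infty \Gamma \otimes A) \rtimes_r \Gamma$ by composing the crossed product of the equivariant cp map from Lemma 2 with the $\ast$-homomorphism $A^{\ast\ast} \rtimes_r \Gamma \to (A \rtimes_r \Gamma)^{\ast\ast}$ from Lemma 1. The only differences are cosmetic: you pass through the full crossed product and use amenability of $\Gamma \curvearrowright \ell^\infty\Gamma \otimes A$ to descend to the reduced one, where the paper invokes functoriality of reduced crossed products for equivariant cp maps directly, and the final transfer step you worry about is exactly the standard fact (weak nuclearity of the inclusion into the bidual implies nuclearity) that the paper cites as Proposition 2.3.8 of \cite{BO}.
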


\begin{proof}
Combining the previous lemmata and considering the functoriality of taking reduced crossed products with respect to equivariant cp maps, we have the following commutative diagram:

\[\xymatrix{
(\ell^\infty \Gamma \otimes A) \rtimes_r \Gamma \ar[r] & A^{\ast\ast} \rtimes_r \Gamma \ar[r] & (A \rtimes_r \Gamma)^{\ast\ast} \\
A \rtimes_r \Gamma \ar[u] \ar[ru] \ar[rru]
}\]

Thus the natural inclusion $A \rtimes_r \Gamma \hookrightarrow (A \rtimes_r \Gamma)^{\ast\ast}$ factors through $(\ell^\infty \Gamma \otimes A) \rtimes_r \Gamma$ which is nuclear since the action $\Gamma \curvearrowright \ell^\infty \Gamma \otimes A$ is amenable whenever $\Gamma$ is exact and $A$ is unital. Therefore $A \rtimes_r \Gamma$ is nuclear. Recall that any $C^\ast$-algebra whose inclusion into its double dual is nuclear is nuclear (see Proposition 2.3.8. of \cite{BO}).
\end{proof}

Finally we remark that a possible counterexample is the non-exact group $\Gamma$ constructed by Gromov \cite{G} (see also \cite{O}). But we do not know whether $\ell^\infty \Gamma \rtimes \Gamma = \ell^\infty \Gamma \rtimes_r \Gamma$ or not. 

\section*{acknowledgement} I would like to thank Y. Kawahigashi and N. Ozawa for valuable discussions and comments.

\end{document}